\newcommand{\ds}{\displaystyle} 
\renewcommand{\phi}{\ensuremath{\varphi}}
\newcommand{\Q}{\ensuremath{\mathbb{Q}}}
\newcommand{\Z}{\ensuremath{\mathbb{Z}}}
\newcommand{\C}{\ensuremath{\mathbb{C}}}
\newcommand{\cA}{\ensuremath{\mathcal{A}}}
\newcommand{\into}{\ensuremath{\hookrightarrow}} 
\DeclareMathOperator{\Hilb}{Hilb}
\DeclareMathOperator{\rk}{rk}				
\DeclareMathOperator{\gr}{gr}
\DeclareMathOperator{\nbc}{nbc}
\newtheorem{lem}{Lemma}[section]
\newtheorem{thm}[lem]{Theorem}
\theoremstyle{definition}
\newtheorem{ex}[lem]{Example}
\theoremstyle{remark}
\newtheorem{rmk}[lem]{Remark}
\begin{document}

\title{Cohomology of abelian arrangements}
\author{Christin Bibby}
\address{Department of Mathematics, University of Oregon, Eugene, Oregon 97403}
\curraddr{Department of Mathematics, Western University, London, Ontario, Canada N6A 5B7}
\email{cbibby2@uwo.ca}
\thanks{Partially supported by NSF grant DMS-0950383.}
\date{}
\subjclass[2000]{Primary 52C35; Secondary 14F99, 55T99}

\begin{abstract}
An abelian arrangement is a finite set of codimension one abelian subvarieties (possibly translated) in a complex abelian variety. In this paper, we study the cohomology of the complement of an abelian arrangement. For unimodular abelian arrangements, we provide a combinatorial presentation for a differential graded algebra whose cohomology is isomorphic to the rational cohomology of the complement. Moreover, this DGA has a bi-grading that allows us to compute the mixed Hodge numbers.
\end{abstract}

\maketitle

\section{Introduction}\label{introduction}

The goal of this paper is to study the cohomology of the complement of an abelian arrangement. Here, an abelian arrangement is a finite set of codimension one abelian subvarieties (possibly translated) in a complex abelian variety $X$. A special case of this is the configuration space of $n$ points on an elliptic curve $E$. This is the complement of an elliptic version of the braid arrangement in $E^n$, where for $1\leq i<j\leq n$, we have an abelian subvariety $Y_{ij}$ given by the equation $e_i=e_j$. A more general special case is given by any $\ell\times n$ integer matrix, defining an arrangement in $E^n$. Here, each row of the matrix represents an algebraic map $\alpha:E^n\to E$, and we consider the connected components of $\ker\alpha$ for each such $\alpha$.

Totaro \cite{totaro} and Kriz \cite{kriz} each independently studied the cohomology of configuration spaces of smooth complex projective varieties. In particular, their work gives a presentation of a model for the cohomology in our special case of a configuration space on an elliptic curve. 
In this paper, we generalize Totaro's method to compute the rational cohomology of the complement of any abelian arrangement $\cA$ in a complex abelian variety $X$. We denote this complement by $M(\cA)$, and arrive at our results by studying the Leray spectral sequence of the inclusion $f:M(\cA)\into X$. Specifically, we use Hodge theory to show degeneration of this spectral sequence at the $E_3$ term.
Most results stated are valid when considering a complex torus rather than an abelian variety; we need the algebraic structure when discussing the Hodge theory. 

Our results are particularly nice in the case where $\cA$ is unimodular, which means that all multiple intersections of subvarieties in $\cA$ are connected. In this case, we give a presentation of a differential graded algebra $A(\cA)$ in terms of the combinatorics of $\cA$ (the partially ordered set consisting of all intersections of subvarieties in $\cA$). The cohomology of $A(\cA)$  is isomorphic as a graded algebra to the cohomology of $M(\cA)$, by Theorem \ref{DGA}. Moreover, $A(\cA)$ admits a second grading, and it is canonically isomorphic as a bi-graded algebra to $\gr H^*(M(\cA);\Q)$, the associated graded with respect to Deligne's weight filtration. Thus it allows us to compute the mixed Hodge numbers of $M(\cA)$.

\begin{rmk}
While the weight filtration on the cohomology of the complement of a hyperplane or toric arrangement is trivial (by \cite{looijenga}), for an abelian arrangement it is always interesting. For example, consider a punctured elliptic curve $M(\cA)=E\smallsetminus\{p_1,\dots,p_\ell\}$. Here, the first filtered piece of $H^1(M(\cA);\Q)$ consists of the image of the restriction map from $H^1(E;\Q)$, which is neither trivial nor surjective.
This can be seen in the short exact sequence
$$0 \to H^1(E;\Q) \to H^1(E\smallsetminus\{p_1,\dots,p_\ell\};\Q) \to \Q(-1)^{\oplus(\ell-1)}\to0.$$
\end{rmk}

Levin and Varchenko \cite{levinvarchenko} computed cohomology of elliptic arrangements with coefficients in a  nontrivial rank one local system.
Dupont \cite{dupont} also studied the more general case of the complement to a union of smooth hypersurfaces which intersect like hyperplanes in a smooth projective variety. Dupont used a similar but alternative method to that presented in this paper to find the same model for cohomology as described in Section \ref{cohomology}, but he does not give the combinatorial presentation in Section \ref{unimodular}.

In \cite{dupont2}, Dupont uses our decomposition of the Leray spectral sequence in Lemma \ref{decomposition} to show that all toric arrangements are formal. In \cite{suciu}, Suciu uses the model given in Theorem \ref{DGA} to study resonance varieties and formality of elliptic arrangements. 

\bigskip
\textbf{Acknowledgements}. The author would like to thank her advisor Nick Proudfoot for his many helpful comments, suggestions, and guidance. The author would also like to thank Dev Sinha for his advice. 

\section{Preliminaries}\label{preliminaries}

We  consider an arrangement $\cA=\{Y_1,\dots,Y_\ell\}$ of smooth connected divisors in a smooth complex variety $X$, which intersect like hyperplanes. When we say that they \textbf{intersect like hyperplanes}, we mean that for every $p\in X$, there is a neighborhood $U\subseteq X$ of $p$, a neighborhood $V\subseteq T_pX$ of $0$, and a homeomorphism $\phi:U\to V$ that induces $Y_i\cap U\cong T_pY_i\cap V$ for all $Y_i\in \cA$. 

An \textbf{abelian arrangement} is an arrangement $\cA=\{Y_1,\dots,Y_\ell\}$ in an abelian variety $X$ where each $Y_i$ is, up to translation, a codimension-one abelian subvariety. Note that these subvarieties intersect like hyperplanes. 

A \textbf{component} of an arrangement $\cA$ is a connected component of an intersection $Y_S:=\ds\bigcap_{Y\in S}Y$ for some subset $S\subseteq \cA$. Note that the intersections themselves need not be connected. 
We say that the arrangement is \textbf{unimodular} if the intersection $Y_S$ is connected for all subsets $S\subseteq\cA$.
The \textbf{rank} of a component is defined as its complex codimension in $X$. 
Note that for a subset $S\subseteq\cA$ with nonempty intersection $Y_S$, the rank of its components is constant. Hence we define the rank of a subset $S\subseteq\cA$ as the rank of a connected component of $Y_S$, which does not depend on the choice of component. 
If $\rk(S)=|S|$, we say that $S$ is \textbf{independent}. Otherwise, $\rk(S)<|S|$ and we say that $S$ is \textbf{dependent}.
Also let $M(\cA)=X\setminus\cup_{Y\in\cA} Y$ be the complement of the union of the divisors in $\cA$.

\begin{ex}\label{ellipticex}
We describe the motivation behind the terminology using our special case of an elliptic arrangement, where we have $X=E^n$ and an $\ell\times n$ integer matrix. As described in the introduction, each row corresponds to a map $\alpha_i:E^n\to E$. Assume that each row is primitive, so that each $Y_i=\ker\alpha_i$ is a connected abelian subvariety of $X$.  

In this case, an intersection $Y_S$ is the kernel of an $|S|\times n$ submatrix, taking the corresponding rows $\alpha_i$ for $Y_i\in S$. The codimension of $Y_S$ is the rank of the corresponding matrix. In this way, the dependencies of the hyperplanes in $\cA$ correspond to the dependencies of the corresponding $\alpha_i$'s in $\Z^n$.

Further suppose that $\cA$ is a unimodular arrangement and that the rank of the $\ell\times n$ matrix is equal to $n$. Then all $n\times n$ submatrices will have determinant $\pm1$ or $0$. Otherwise, an intersection of subvarieties (that is, the kernel of the corresponding submatrix) would be disconnected. This agrees with the usual notion of a unimodular matrix.
\end{ex}

Let $F$ be a component of the arrangement $\cA$. For any point $p\in F$, define an arrangement $\cA_F^{(p)}$ in the tangent space $T_pX$ consisting of hyperplanes $Y_F^{(p)}:=T_pY$ for all $Y\supseteq F$. If $X$ has complex dimension $n$, then $\cA_F^{(p)}$ is a central hyperplane arrangement in $T_pX\cong \C^n$, and we denote its complement by $M(\cA_F^{(p)})=T_pX\setminus\cup_{Y\supseteq F} Y^{(p)}_F$. This arrangement may be referred to as the localization of $\cA$ at $F$, with respect to the point $p\in F$.

\begin{rmk}\label{localizationrmk} 
We say that a point $p\in F$ is a generic point of $F$ if $p$ is not contained in any smaller component of $\cA$. By our assumption that the divisors intersect like hyperplanes, for a generic point $p\in F$, there is a neighborhood $U\subseteq X$ of $p$ such that $U\cap M(\cA)\cong M(\cA_F^{(p)})$.
\end{rmk}

\begin{rmk}\label{localizationrmk2}
Also by our assumption that the divisors intersect like hyperplanes, the intersection lattice of the arrangement $\cA_F^{(p)}$ does not depend on the choice of $p\in F$. Since the cohomology of $M(\cA_F^{(p)})$ only depends on the combinatorics of $\cA_F^{(p)}$ (by \cite[Theorem~5.90]{orlikterao}), we may write $H^*(M(\cA_F);\Q)$ to mean the cohomology of $M(\cA_F^{(p)})$ for some (any) $p\in F$.

If $\cA$ is an abelian arrangement, then even more can be said. Not only does the cohomology not depend on the choice of $p\in F$, but for any two points $p$ and $q$ of $F$, we have a canonical homeomorphism (via translation) $M(\cA_F^{(p)})\cong M(\cA_F^{(q)})$.
\end{rmk}

\section{Rational Cohomology}\label{cohomology}

Let $\cA=\{Y_1,\dots,Y_\ell\}$ be a set of smooth connected divisors that intersect like hyperplanes in a smooth complex variety $X$, and denote the complement of their union in $X$ by $M(\cA)$.
The inclusion $f:M(\cA)\into X$ gives a Leray spectral sequence of the form
$$E_2^{p,q}=H^p(X;R^qf_*\Q)\implies H^{p+q}(M(\cA);\Q).$$
Recall that $R^qf_*\Q$ is the sheafification of the presheaf on $X$ taking an open set $U$ to $H^q(U\cap M(\cA);\Q)$.

To make use of this spectral sequence, we need to examine the sheaves $R^qf_*\Q$. 
The key is to show that the sheaf $R^qf_*\Q$ is isomorphic to the sheaf $$\displaystyle\bigoplus_{\rk(F)=q} H^q(M(\cA_F);\Q)\otimes (i_F)_*\Q_F,$$ where $i_F:F\into X$ is the closed immersion of a rank-$q$ component $F$. In the proof of the following lemma, we will show this isomorphism by constructing a map on the presheaves which is an isomorphism on stalks. 

\begin{lem}\label{decomposition}
Let $\cA=\{Y_1,\dots,Y_\ell\}$ be a set of smooth connected divisors that intersect like hyperplanes in a smooth complex variety $X$. Then
$$H^p(X;R^qf_*\Q)\cong \bigoplus_{\rk(F)=q}H^p(F;\Q)\otimes H^q(M(\cA_F);\Q).$$
\end{lem}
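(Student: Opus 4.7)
The plan is to prove a sheaf-level decomposition $R^qf_*\Q\cong\bigoplus_{\rk(F)=q}\epsilon_F$ into constant sheaves $\epsilon_F$ supported on each rank-$q$ component, and then read off the cohomology summand by summand. For each rank-$q$ component $F$, let $i_F:F\into X$ denote the closed inclusion and define $\epsilon_F:=(i_F)_*\underline{V_F}$, where $\underline{V_F}$ is the constant sheaf on $F$ with value $V_F:=H^q(M(\cA_F);\Q)$. Because $i_F$ is a closed embedding, the stalk of $\epsilon_F$ at $x$ is $V_F$ when $x\in F$ and zero otherwise, so the stalk of $\bigoplus_{\rk(F)=q}\epsilon_F$ at $x$ is $\bigoplus_{F\ni x,\,\rk(F)=q}V_F$, matching the stalk description of $R^qf_*\Q$ that was established just before the lemma using Brieskorn's Lemma.

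The main work is upgrading this stalkwise agreement to a map of sheaves. For each rank-$q$ component $F$ I would construct a morphism $\epsilon_F\to R^qf_*\Q$; by the $(i_F)^*\dashv(i_F)_*$ adjunction for a closed embedding, this reduces to a morphism $\underline{V_F}\to(i_F)^*R^qf_*\Q$ on $F$. At a generic point $y\in F$ one has $F_y=F$, and Brieskorn's decomposition identifies $V_F$ as the canonical summand indexed by $F$ inside $(R^qf_*\Q)_y$. Remark \ref{localizationrmk2} ensures that this identification is independent of the choice of generic point: for abelian arrangements, translation within $F$ produces honest homeomorphisms of the local models $M(\cA_F^{(p)})\cong M(\cA_F^{(q)})$, and more generally the combinatorial invariance of the Orlik--Solomon algebra shows the splitting is locally constant on the generic stratum of $F$. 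The splittings then extend to non-generic points of $F$ by compatibility of the restriction maps in the presheaf defining $R^qf_*\Q$ with Brieskorn's decomposition, yielding a global map $\underline{V_F}\to (i_F)^*R^qf_*\Q$ and hence $\epsilon_F\to R^qf_*\Q$. Summing over $F$ gives a sheaf map which is an isomorphism by the stalk check already carried out.

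With the sheaf decomposition in hand, the cohomology computation is routine: exactness of $(i_F)_*$ for a closed embedding gives $H^p(X;\epsilon_F)=H^p(F;\underline{V_F})=H^p(F;\Q)\otimes V_F$, and summing over rank-$q$ components $F$ produces exactly the stated formula. The main obstacle in this plan is the globalization step in paragraph two, i.e.\ verifying that the local Brieskorn splittings fit together coherently along $F$; in the abelian setting of primary interest, Remark \ref{localizationrmk2} makes this transparent because translation provides genuine isomorphisms of local models, whereas for the general ``intersect like hyperplanes'' hypothesis one leans more heavily on the constructibility of $R^qf_*\Q$ with respect to the stratification by components of $\cA$.
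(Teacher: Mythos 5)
Your proposal diverges from the paper's argument at the crucial step — the construction of the comparison map of sheaves — and the adjunction you invoke there is applied in the wrong direction, leaving a real gap. You define $\epsilon_F:=(i_F)_*\underline{V_F}$ and then claim that by the $(i_F)^*\dashv(i_F)_*$ adjunction, constructing a map $\epsilon_F\to R^qf_*\Q$ ``reduces to'' constructing $\underline{V_F}\to(i_F)^*R^qf_*\Q$. But $(i_F)^*\dashv(i_F)_*$ identifies morphisms \emph{into} $(i_F)_*\underline{V_F}$, not morphisms out of it. For a closed embedding the adjunction governing maps out of $(i_F)_*$ is $(i_F)_*\dashv(i_F)^!$, so what you actually need is $\underline{V_F}\to(i_F)^!R^qf_*\Q$, which is a strictly more demanding target. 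The two really do differ here: for two coordinate lines $Y_1,Y_2$ through the origin in $\C^2$ with $F=Y_1$, the stalk of $(i_F)^*R^1f_*\Q$ at the origin is $\Q^2$ (the full Brieskorn sum), whereas the stalk of $(i_F)^!R^1f_*\Q$ is just the $\Q$ coming from $Y_1$; it is the latter sheaf that is constant on $F$ and for which the constant map exists. So the adjunction you offer does not deliver the morphism, and the subsequent ``extend to non-generic points by compatibility'' sentence — which you yourself flag as the main obstacle — is precisely the unproved part.

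The paper's proof sidesteps this entirely by choosing a different model for $\epsilon_F$. It sets $\epsilon_F:=R^qg^F_*\Q$ where $g^F:M(\cA|_F)\into X$ is the inclusion of the complement of the sub-arrangement $\cA|_F$ of divisors containing $F$. With that definition, the inclusion $U\cap M(\cA)\into U\cap M(\cA|_F)$ induces a presheaf map $\epsilon_F\to R^qf_*\Q$ automatically, with no adjunction or gluing-along-$F$ argument; one then checks separately that $\epsilon_F$ is supported on $F$, constant there (via Remark \ref{localizationrmk2}), and that the summed map is a stalkwise isomorphism by Brieskorn. Your stalk computations, the Brieskorn decomposition, and the final step $H^p(X;\epsilon_F)\cong H^p(F;\Q)\otimes V_F$ are all fine and agree with the paper; what's missing is the paper's idea of realizing $\epsilon_F$ as a higher direct image from a sub-arrangement so that the comparison map comes for free.
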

\begin{proof}
First, we examine the stalks of $R^qf_*\Q$. 
Let $x\in X$. In the following discussion, the localization of $\cA$ at a component will always be with respect to the point $x$, and we will drop the superscript from our notation for localizations. 
Take the unique smallest  component $F_x$ of $\cA$ containing $x$.
Note that $x$ is a generic point of $F_x$, and so for every small enough neighborhood $U$ around $x$, we have $U\cap M(\cA)\cong M(\cA_{F_x})$.
This means that the stalk of our sheaf $R^qf_*\Q$ at $x$ is given by $$H^q(U\cap M(\cA);\Q)\cong H^q(M(\cA_{F_x});\Q).$$

Note that the rank-$q$ components of $\cA_{F_x}$ correspond exactly to the rank-$q$ components of $\cA$ that contain $F_x$.
For such an $F$, we can consider the usual localization of $\cA_{F_x}$ (a central hyperplane arrangement) at the component corresponding to $F$ in $\cA_{F_x}$, denoted by $(\cA_{F_x})_F$. This is the same arrangement as $\cA_F$. Then
Brieskorn's Lemma \cite[p.~27]{brieskorn} implies that $$H^q(M(\cA_{F_x});\Q)\cong\bigoplus_{\substack{F\supseteq F_x\\ \rk(F)=q}} H^q(M( (\cA_{F_x})_F);\Q)\cong \bigoplus_{\substack{F\supseteq F_x\\ \rk(F)=q}} H^q(M(\cA_F);\Q).$$
Since $x$ was a generic point of $F_x$, the rank-$q$ components containing $F_x$ are exactly the rank-$q$ components containing $x$. 
Thus, the stalk at $x\in X$ can be decomposed as 
$$(R^qf_*\Q)_x\cong \bigoplus_{\substack{F\ni x \\ \rk(F)=q}}H^q(M(\cA_F);\Q).$$

Now denote $\epsilon:=\displaystyle\bigoplus_{\rk(F)=q} H^q(M(\cA_F);\Q)\otimes (i_F)_*\Q_F$.
Let $F$ be a component of rank $q$, and consider the summand of $\epsilon$ corresponding to $F$, which we denote by $\epsilon_F$. Note that for an open $U\subseteq X$, we have that $\epsilon_F(U)$ is the direct sum of $H^q(M(\cA_F);\Q)$ over the connected components of $U\cap F$. Moreover, the stalk of $\epsilon$ at $x\in X$ is 
$$\epsilon_x= \bigoplus_{\rk(F)=q}(\epsilon_F)_x\cong \bigoplus_{\substack{\rk(F)=q,\\ x\in F}}H^q(M(\cA_F);\Q).$$

For a rank-$q$ component $F$, consider the arrangement in $X$ defined by $\cA|_F=\{H\in\cA\ |\ H\supseteq F\}$. Note that $F$ is also a component of $\cA|_F$, and every element of $F$ is generic here. So for $x\in F$, take a neighborhood basis $\mathcal{U}_x$ of increasingly small open sets $U$ so that $U\cap M(\cA|_F)$ is homeomorphic to $M((\cA|_F)_F^{(x)})$. 
Together with open sets that don't intersect $F$, these form a basis for $X$. We define a map $\epsilon_F\to R^qf_*\Q$ on this basis, and taking the sum, we will get a morphism $\epsilon\to R^qf_*\Q$. First, if $U\cap F=\emptyset$, then $\epsilon_F(U)=0$ and hence the map on sections is zero. Otherwise, if $U\in\mathcal{U}_x$ for some $x\in F$, then the composition of the inclusion $U\cap M(\cA)\into U\cap M(\cA|_F)$ and the homeomorphism $U\cap M(\cA|_F)\to M((\cA|_F)_F^{(x)})$ induces a map on cohomology $H^q(M(\cA_F);\Q)\to H^q(U\cap M(\cA);\Q)$, giving us a map on sections. 
Now the induced map $\epsilon\to R^qf_*\Q$ is an isomorphism on stalks, hence a sheaf isomorphism. 

Returning to the $E_2$ term of our Leray spectral sequence for the inclusion $f:M(\cA)\into X$, we now have that 
\begin{align*}
H^p(X;R^qf_*\Q)
&\cong H^p(X;\epsilon)\\
&\cong\bigoplus_{\rk(F)=q}H^p(X;H^q(M(\cA_F);\Q)\otimes(i_F)_*\Q_F)\\
&\cong\bigoplus_{\rk(F)=q}H^p(F;\Q)\otimes H^q(M(\cA_F);\Q).
\end{align*}
\end{proof}

If we further take $X$ to be a projective variety, then the $E_2$ term of the spectral sequence is all that is needed to calculate the cohomology of $M(\cA)$. 
\begin{lem}\label{degeneration}
Let $\cA=\{Y_1,\dots,Y_n\}$ be a set of smooth connected divisors that intersect like hyperplanes in a smooth complex projective variety $X$, and denote its complement by $M(\cA)$. Then all differentials $d_j$ in the Leray spectral sequence for the inclusion $f:M(\cA)\into X$ are trivial for $j>2$.
\end{lem}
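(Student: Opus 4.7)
\emph{Proof plan.} The plan is to equip the Leray spectral sequence with mixed Hodge structures so that $E_2^{p,q}$ is pure of weight $p+2q$, and then to observe that $d_r$ for $r>2$ would be a map between pure Hodge structures of different weights and must therefore vanish.

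First, I would identify the weights on the $E_2$-page. By Lemma \ref{decomposition},
$$E_2^{p,q}\cong\bigoplus_{\rk(F)=q}H^p(F;\Q)\otimes H^q(M(\cA_F);\Q).$$
Each component $F$ is smooth (since the divisors intersect like hyperplanes) and closed in the projective variety $X$, hence smooth projective, so $H^p(F;\Q)$ is pure of weight $p$ by classical Hodge theory. The factor $H^q(M(\cA_F);\Q)$ is the degree-$q$ cohomology of the complement of a central hyperplane arrangement in $\C^n$; in the Orlik--Solomon realization it is spanned by products of logarithmic forms $\frac{dz}{z}$ and carries a pure Hodge structure of type $(q,q)$, that is, of weight $2q$. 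Hence each summand, and therefore $E_2^{p,q}$, is pure of weight $p+2q$.

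Second, I would invoke the fact that the Leray spectral sequence for the inclusion of a smooth algebraic variety into a smooth projective one is a spectral sequence of mixed Hodge structures, so each $d_r$ is a morphism of MHS. The standard way to establish this is to pass to a log resolution making $\bigcup Y_i$ into a simple normal crossings divisor and then invoke Deligne's construction, verifying compatibility of the resulting weight filtration on $H^*(M(\cA);\Q)$ with the Leray filtration. This compatibility check is the technical heart of the argument and the main obstacle.

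Third, I would conclude by weight counting. Subquotients of pure Hodge structures of weight $w$ are again pure of weight $w$, so each $E_r^{p,q}$ remains pure of weight $p+2q$. The differential $d_r$ lands in $E_r^{p+r,q-r+1}$, which is pure of weight $(p+r)+2(q-r+1)=p+2q-(r-2)$. For $r>2$ the two weights differ, and any morphism of mixed Hodge structures between pure Hodge structures of different weights is zero (its image would be a subquotient of each, hence pure of both weights simultaneously). Thus $d_r=0$ for $r>2$, as claimed.
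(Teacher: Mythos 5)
Your proposal is correct and follows essentially the same route as the paper: decompose $E_2^{p,q}$ via Lemma \ref{decomposition}, note that $H^p(F;\Q)$ is pure of weight $p$ and $H^q(M(\cA_F);\Q)$ is pure of weight $2q$ (the paper cites Shapiro for this), deduce purity of weight $p+2q$, and then kill the differentials $d_j$ for $j>2$ by the weight count $(p+j)+2(q-j+1)=p+2q-j+2$. The one place you say more than the paper is in flagging that one must justify that the Leray spectral sequence is a spectral sequence of mixed Hodge structures (via a log resolution and Deligne's theory); the paper simply asserts strict compatibility of the differentials with the weight filtration and moves on, so this is not a divergence in method but a difference in how much of the background is spelled out.
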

\begin{proof}
To show that higher differentials are trivial, we consider the weight filtration on $$H^p(X;R^qf_*\Q)\cong \bigoplus_{\rk(F)=q}H^p(F;\Q)\otimes H^q(M(\cA_F);\Q).$$ Note that since $F$ is a smooth complex projective variety, $H^p(F;\Q)$ is pure of weight $p$. Since $M(\cA_F)$ is the complement of a rational hyperplane arrangement, $H^q(M(\cA_F);\Q)$ is pure of weight $2q$ (by \cite{shapiro}).
This implies that $H^p(X;R^qf_*\Q)$ is pure of weight $p+2q$.

Now, the differentials must be strictly compatible with the weight filtration, as explained in the proof of Theorem 3 of \cite{totaro}. Since the $(p,q)$ position on the $E_j$ term will also have weight $p+2q$, the differential $d_j$ will map something of weight $p+2q$ to something of weight $(p+j)+2(q-j+1)=p+2q-j+2$. Being strictly compatible with weights implies that the only nontrivial differential must be when $j=2$.
\end{proof}

Moreover, if we consider only the cohomological grading (by $p+q$) on the $E_2$ term, we have the following theorem.

\begin{thm}\label{cohomologythm}
Let $\cA=\{Y_1,\dots,Y_n\}$ be a set of smooth connected divisors that intersect like hyperplanes in a smooth complex projective variety $X$.
The rational cohomology of $M(\cA)$ is isomorphic as a graded algebra to the cohomology of $E_2(\cA)$ with respect to its differential. 
\end{thm}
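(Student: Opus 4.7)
The plan is to apply Lemma \ref{degeneration} to collapse the Leray spectral sequence at $E_3$, and then to use the weight structure on the $E_2$ page to split the resulting Leray filtration on $H^*(M(\cA); \Q)$ multiplicatively. By definition of the spectral sequence, $E_3 = H^*(E_2, d_2)$ as bigraded algebras. Lemma \ref{degeneration} asserts that all higher differentials $d_j$ with $j > 2$ vanish, so $E_\infty = E_3 = H^*(E_2, d_2)$ as bigraded algebras. Since $E_\infty$ is canonically the associated graded of $H^*(M(\cA); \Q)$ with respect to the Leray filtration $L^\bullet$, collapsing the bigrading to the total cohomological grading $n = p+q$ produces an isomorphism $\gr_L H^*(M(\cA); \Q) \cong H^*(E_2, d_2)$ of graded algebras.

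It then remains to lift this to an isomorphism $H^*(M(\cA); \Q) \cong \gr_L H^*(M(\cA); \Q)$ of graded algebras, that is, to split the Leray filtration in a way compatible with cup product. Here the weight filtration does the work, as in the proof of Lemma \ref{degeneration}. The Leray spectral sequence of the open inclusion $f : M(\cA) \into X$ is a spectral sequence of mixed Hodge structures, so $L^\bullet$ is a filtration by sub-MHS, and each $\gr_L^p H^{p+q}(M(\cA); \Q) = E_\infty^{p,q}$ is pure of weight $p + 2q$. For fixed cohomological degree $n$, the weights $p + 2q = 2n - p$ are strictly decreasing in $p$, hence all distinct across Leray positions. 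Consequently, on each $H^n(M(\cA); \Q)$ the Leray filtration coincides, up to reindexing, with the weight filtration. The weight filtration is multiplicative under cup product, and the distinctness of weights at each Leray level in every cohomological degree lets one identify $\gr_L^p H^n$ with a unique weight subquotient; assembling these identifications yields the desired multiplicative splitting and therefore the graded algebra isomorphism $H^*(M(\cA); \Q) \cong H^*(E_2, d_2)$.

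The main obstacle is this final splitting step: the weight filtration on a rational mixed Hodge structure does not split canonically in general, and a multiplicative splitting is a stronger demand than a vector-space splitting. What rescues the situation is the strict monotonicity of weights across Leray degree in each fixed cohomological degree, which pins down the pieces $E_\infty^{p,q}$ inside $H^{p+q}(M(\cA); \Q)$ unambiguously, together with the multiplicativity of the weight filtration. Once that step is in place, the chain $H^*(M(\cA); \Q) \cong \gr_L H^*(M(\cA); \Q) \cong E_\infty = H^*(E_2, d_2)$ assembles to give the theorem.
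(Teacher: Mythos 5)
Your first two steps match the paper exactly: degeneration at $E_3$ from Lemma \ref{degeneration}, and then the observation that $E_\infty^{p,q}$ is pure of weight $p+2q$, so that the weights $2k-p$ are distinct across the Leray positions contributing to $H^k$, and consequently the Leray and weight filtrations coincide. The divergence is in the final step, and there you have a genuine gap.

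You correctly flag the obstacle---a rational mixed Hodge structure does not split canonically, and a multiplicative splitting is stronger still---but the ``rescue'' you offer does not actually rescue anything. Distinctness of the weights appearing in each $H^n$ determines the weight filtration and identifies each $\gr_L^p H^n$ as a subquotient of $H^n$ of a definite weight, but a filtration is not a direct sum decomposition: there are many complements to $W_{m-1}H^n$ inside $W_m H^n$, and nothing in your argument picks one out, let alone picks them out across all degrees in a way compatible with cup products. Multiplicativity of the weight filtration ($W_i \cdot W_j \subseteq W_{i+j}$) only tells you that $\gr_W H^*$ is a well-defined algebra, which you already knew; it does not produce an algebra map $\gr_W H^* \to H^*$ splitting the quotient. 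The phrase ``assembling these identifications yields the desired multiplicative splitting'' is precisely where a theorem is needed and none is given. The paper supplies this by invoking a result of Deligne, namely that the associated graded of $H^*(M(\cA);\Q)$ with respect to the weight filtration is isomorphic to $H^*(M(\cA);\Q)$ as an algebra; this is the nontrivial input that makes the final step go through, and you should cite it (or an equivalent splitting theorem) rather than assert the splitting follows from the distinctness of weights.
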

\begin{proof}
By Lemma \ref{degeneration}, the Leray spectral sequence degenerates at the $E_3$ term. This implies that the associated graded of $H^*(M(\cA);\Q)$ with respect to the Leray filtration is isomorphic to the cohomology of $E_2(\cA)$.

The groups $E_3^{p,q}=E_\infty^{p,q}$ that contribute to the $k$-th rational cohomology (when $p+q=k$) each have distinct weight (as described in the proof of Lemma \ref{degeneration}), and so the Leray filtration is exactly the weight filtration.
By the work of Deligne \cite[p.~81]{deligne}, the associated graded of $H^*(M(\cA);\Q)$ with respect to its weight filtration is isomorphic to $H^*(M(\cA);\Q)$ as an algebra.
\end{proof}

\begin{rmk}\label{gr}
The $E_2$ term of the spectral sequence forms a differential bi-graded algebra, denoted by $E_2(\cA)$.
The main result of this section was that  we have an isomorphism of algebras 
$$H^*(E_2(\cA))\cong \gr H^*(M(\cA);\Q),$$
where the right hand side is the associated graded with respect to the weight filtration. In particular, if we consider the bi-grading (and not just the cohomological grading) of $E_2(\cA)$, we have
$$H^{p,q}(E_2(\cA))\cong\gr_{p+2q} H^{p+q}(M(\cA);\Q),$$
and we can compute the mixed Hodge numbers of $M(\cA)$. 
\end{rmk}

\begin{rmk}
The same method could be used to study the cohomology of an affine hyperplane arrangement in $\C^n$ or of a toric arrangement in $(\C^\times)^n$.
In fact, this is originally due to Looijenga \cite{looijenga}. 
In these cases, Lemma \ref{decomposition} applies, but Lemma \ref{degeneration} and Theorem \ref{cohomologythm} do not.
\begin{enumerate}
\item Let $\cA=\{H_1,\dots,H_\ell\}$ be an affine arrangement of hyperplanes in a complex affine space $X$ of dimension $n$, and denote the complement $M(\cA)=X\smallsetminus\cup_i H_i$. For the Leray Spectral Sequence of the inclusion $f:M(\cA)\into X$, the $E_2$-term decomposes into $E_2^{0,q}=\oplus_{\rk(F)=q}H^q(M(\cA_F);\Q)$ and $E_2^{p,q}=0$ for $p\neq0$. This forces the differentials to all be trivial, and we see that $E_2(\cA)$ is the Orlik-Solomon algebra $H^*(M(\cA);\Q)$. 
\item Let $\cA=\{T_1,\dots,T_\ell\}$ be an arrangement of codimension-one subtori in a complex torus $X=(\C^\times)^n$, and denote the complement by $M(\cA)=X\smallsetminus\cup_iT_i$. The $E_2$-term of the Leray Spectral Sequence for the inclusion $f:M(\cA)\into T$ decomposes into components, so that $$E_2^{p,q}=\oplus_{\rk(F)=q}H^p(F;\Q)\otimes H^q(M(\cA_F);\Q).$$ Here, $F$ is a complex torus and so $E_2^{p,q}$ is pure of weight $2(p+q)$. Since the differentials $d_j$ respect the weights, $d_j$ must be trivial for all $j$. Thus, $E_2(\cA)\cong\gr_LH^*(M(\cA);\Q)$, the associated graded with respect to the Leray filtration. This decomposition of the cohomology is the decomposition given by De Concini and Procesi in \cite[Remark~4.3]{deconciniprocesi}. In the same paper, De Concini and Procesi also gave a presentation for the cohomology ring in the case of unimodular toric arrangements, and one can see that $H^*(M(\cA);\Q)$ and $E_2(\cA)$ are not isomorphic as algebras. While the Orlik-Solomon relation holds in $E_2(\cA)$, it is a little more complicated in $H^*(M(\cA);\Q)$. 
\end{enumerate}
\end{rmk}

\begin{rmk}
Another interesting result for an abelian arrangement $\cA$ in $X$  comes from considering the deletion and restriction arrangements, with respect to some fixed $Y_0\in\cA$. Here, we mean the analogous notion to the theory of hyperplane arrangements, where the deletion of $Y_0$ is the arrangement $\cA'=\cA\setminus\{Y_0\}$ in $X$ and the restriction to $Y_0$ is the arrangement $\cA''$ in $Y_0$ of all nonempty connected components of $Y\cap Y_0$ where $Y\in\cA'$. 
In the theory of hyperplane arrangements, the long exact sequence of the pair $(M(\cA'),M(\cA))$ relates the cohomologies of $M(\cA)$, $M(\cA')$, and $M(\cA'')$, as follows:
$$\cdots\to H^i(M(\cA'))\to H^i(M(\cA))\to H^{i-1}(M(\cA''))\to H^{i+1}(M(\cA))\to\cdots$$
Moreover, this long exact sequence splits into short exact sequences relating these cohomologies. In the abelian arrangement case, we can get the same kind of long exact sequence. However, it does not split into short exact sequences.
To study the (nontrivial) boundary map, we can consider the long exact sequence induced by a short exact sequence of complexes
$$0\to E_2(\cA')^*\to E_2(\cA)^*\to E_2(\cA'')^{*-1}\to 0.$$
Note that this long exact sequence involves the weight graded quotients, but it is isomorphic to the long exact sequence of the pair.
The boundary map is then seen to be $$\pi_*:H^{i-1}(M(\cA'');\Q)(-1)\to H^{i+1}(M(\cA');\Q)$$ where $\pi:M(\cA'')\into M(\cA')$ is the closed immersion.
\end{rmk}

\begin{rmk}\label{rationalhomotopy}
Dupont \cite{dupont} independently found the same differential graded algebra as described here. He considers the cohomology of the complement of a union $Y=Y_1\cup\cdots\cup Y_\ell$ of smooth hypersurfaces which intersect like hyperplanes in a smooth complex projective variety $X$, and for simplicity he assumes that the arrangement is unimodular. Dupont's method uses the Gysin spectral sequence, which degenerates at the $E_2$ term and has a differential graded algebra $M^*(X,Y)$ as the $E_1$ term. Setting $\cA=\{Y_1,\dots,Y_\ell\}$, the differential graded algebras $E_2(\cA)$ and $M^*(X,Y)$ are isomorphic.
Moreover, Dupont constructs a wonderful compactification of these arrangements, so that the space $X\setminus Y$ can be realized as the complement of a normal crossings divisor $\widetilde{Y}$ in a smooth projective variety $\widetilde{X}$. Dupont also shows functoriality of $M^*$ so that $M^*(X,Y)$ is quasi-isomorphic to $M^*(\widetilde{X},\widetilde{Y})$.

By the work of Morgan \cite{morgan}, the differential graded algebra $M^*(\widetilde{X},\widetilde{Y})$ is a model for the space $X\setminus Y=\widetilde{X}\setminus\widetilde{Y}$, in the sense of rational homotopy theory.
Since our differential graded algebra $E_2(\cA)$ is isomorphic to $M^*(X,Y)$ and hence quasi-isomorphic to $M^*(\widetilde{X},\widetilde{Y})$, $E_2(\cA)$ is a model for the space $M(\cA)=X\setminus Y$.
\end{rmk}

\section{Unimodular Abelian Arrangements}\label{unimodular}

To explicitly describe the $\Q$-algebra structure of the $E_2$ term of the spectral sequence, we assume further that $\cA$ is a unimodular abelian arrangement. Recall that we allow the $Y_i\in\cA$ to be a translation of an abelian subvariety of $X$; denote this subvariety by $\bar{Y}_i$. For each $Y_i\in\cA$, let $E_i=X/\bar{Y}_i$, an elliptic curve, so that $\bar{Y}_i$ is the kernel of the projection $\alpha_i:X\to E_i$.
The $E_2$ term is a bi-graded algebra with a differential, which we denote by $E_2(\cA)$. The $(p,q)$-th graded term is isomorphic to $$\bigoplus_{\rk(F)=q} H^p(F;\Q)\otimes H^q(M(\cA_F);\Q)$$ by Lemma \ref{decomposition}. 

The multiplication of $E_2(\cA)$ can be described as follows:
Let $x_1\otimes y_1$ be in $ H^{p_1}(F_1;\Q)\otimes H^{q_1}(M(\cA_{F_1});\Q)$ and $x_2\otimes y_2$ be in $H^{p_2}(F_2;\Q)\otimes H^{q_2}(M(\cA_{F_2});\Q)$.
If $F_1\cap F_2=\emptyset$ or if $\rk(F_1\cap F_2)\neq q_1+q_2$, then $(x_1\otimes y_1)\cdot(x_2\otimes y_2)=0$.
Otherwise, let $F=F_1\cap F_2$ (which by unimodularity is a component of $\cA$), $p=p_1+p_2$, and $q=q_1+q_2$.
Also let, for $j=1,2$, $\gamma_j:F\into F_j$ and $\eta_j:M(\cA_F)\into M(\cA_{F_j})$ be the natural inclusions.
Then $$(x_1\otimes y_1)\cdot (x_2\otimes y_2)=(-1)^{q_1p_2}(\gamma_1^*(x_1)\cup\gamma_2^*(x_2))\otimes (\eta_1^*(y_1)\cup\eta_2^*(y_2)),$$ an element of $ H^p(F;\Q)\otimes H^q(M(\cA_F);\Q)$.

In particular, consider the case that $F_1=Y_i$ and $F_2=X$. For $1\otimes g$ in $H^0(Y_i;\Q)\otimes H^1(M(\cA_{Y_i});\Q)$ and $x\otimes 1$ in $H^p(X;\Q)\otimes H^0(M(\cA_X);\Q)$, we have $$(1\otimes g)\cdot(x\otimes 1)=(-1)^p\gamma_i^*(x)\otimes g\in H^p(Y_i)\otimes H^q(M(\cA_{Y_i});\Q).$$
Since $Y_i$ is (a possible translation of) the kernel of some map $\alpha_i:X\to E_i$, the kernel of $\gamma_i^*$ contains the image of $\alpha_i^*$ in positive degree. This means that for $p>0$, and any element $x\in H^p(X;\Q)$ that is in the image of $\alpha_i^*$, $(1\otimes g)\cdot(x\otimes 1)=0$. 

We further observe that the row $q=0$ inherits an algebra structure from $H^*(X;\Q)$, and the column $p=0$ inherits an algebra structure from the Orlik-Solomon algebra. In particular, if $\cap_{Y\in \cA}Y\neq\emptyset$, then the column $p=0$ inherits an algebra structure from $H^*(M(\cA_0);\Q)$ where $\cA_0$ is the localization at the intersection of all hyperplanes in $\cA$. 
These algebras are generated in degree one; moreover, they will generate the entire $E_2(\cA)$ algebra. This is because the map $\gamma^*:H^*(X;\Q)\to H^*(F;\Q)$, where $F$ is a component and $\gamma:F\into X$ is the natural inclusion, is surjective.

Since the algebra is generated by $E_2^{1,0}$ and $E_2^{0,1}$, it suffices to describe the differential on $H^0(Y_i;\Q)\otimes H^1(M(\cA_{Y_i});\Q)$ for each $Y_i\in\cA$. This has a canonical generator, since the Orlik-Solomon algebra $H^*(M(\cA_{Y_i});\Q)$ has a canonical generator in degree one. The differential here is determined by the differential of the Leray spectral sequence for the inclusions $X\setminus Y_i\into X$, which takes the generator to $[Y_i]\in H^2(X;\Q)$. 

Now we will describe an algebra $A(\cA)$, determined by the arrangement $\cA$, and prove in Theorem \ref{DGA} that this algebra is isomorphic to $E_2(\cA)$.
Let $B(\cA)=H^*(X;\Q)[g_1,\dots,g_\ell]$, a graded-commutative, bi-graded algebra over $\Q$, where $H^i(X;\Q)$ has degree $(i,0)$ and each $g_j$ has degree $(0,1)$.
Let $I(\cA)$ be the ideal in $B(\cA)$ generated by the following relations:
\begin{enumerate}[(1)]
\item $g_{i_1}\cdots g_{i_k}$ whenever $\cap_{j=1}^k Y_{i_j}=\emptyset$.
\item $\ds\sum_{j=1}^k(-1)^{j-1}g_{i_1}\cdots \hat{g}_{i_j} \cdots g_{i_k}$ whenever $Y_{i_1},\dots,Y_{i_k}$ are dependent.
\item $\alpha_i^*(x)g_i$, where $\alpha_i$ defines $Y_i$ and $x\in H^1(E_i;\Q)$.
\end{enumerate}
For notational purposes, denote $g_C=g_{i_1}\cdots g_{i_k}$ for $C=\{Y_{i_1},\dots,Y_{i_k}\}$ and $\partial g_C=\sum_{j=1}^k(-1)^{j-1}g_{i_1}\cdots\hat{g}_{i_j}\cdots g_{i_k}$.

Let $A(\cA)=B(\cA)/I(\cA)$. Since $I(\cA)$ is homogeneous with respect to the grading on $B(\cA)$, $A(\cA)$ is a bi-graded algebra over $\Q$. Moreover, there is a differential on $A(\cA)$ defined by $dg_i=[Y_i]\in H^2(X;\Q)$ and $dx=0$ for $x\in H^*(X;\Q)$.

\begin{thm}\label{DGA}
Assume that $\cA=\{Y_1,\dots,Y_\ell\}$ is a unimodular abelian arrangement. 
Then there is an isomorphism of bi-graded differential algebras $$\phi:A(\cA)\to E_2(\cA).$$
\end{thm}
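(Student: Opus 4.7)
The plan is to define $\phi$ on generators, verify that it descends to a well-defined map of bi-graded differential algebras, and then establish that it is an isomorphism by a component-wise comparison.

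I would define $\phi$ on generators by $\phi(x) = x \otimes 1 \in H^i(X;\Q) \otimes H^0(M(\cA_X);\Q)$ for $x \in H^i(X;\Q)$, and $\phi(g_i) = 1 \otimes g_i$, the canonical Orlik-Solomon generator of $H^0(Y_i;\Q) \otimes H^1(M(\cA_{Y_i});\Q)$, extended multiplicatively via the product rule on $E_2(\cA)$ described in the text. This kills relation (1) because the product vanishes on empty intersections; kills relation (3) because $\phi(\alpha_i^*(x) g_i) = \gamma_i^*\alpha_i^*(x) \otimes g_i$ is zero since $\alpha_i \circ \gamma_i : Y_i \to E_i$ is constant; and kills relation (2) because for a circuit $C$ unimodularity gives $Y_{C \setminus \{i_j\}} = Y_C$ for every $j$, so all summands of $\phi(\partial g_C)$ lie in $H^0(Y_C;\Q) \otimes H^{|C|-1}(M(\cA_{Y_C});\Q)$ and their signed sum is exactly the Orlik-Solomon relation $\partial = 0$ in the rank-deficient localized OS algebra; the general dependent case then follows from the circuit case. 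Compatibility with the differential needs only be checked on generators: both differentials vanish on $H^*(X;\Q)$, and both send $g_i \mapsto [Y_i] \in H^2(X;\Q)$, the right-hand equality being the Leray differential on the canonical OS generator, as recorded in the text.

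Surjectivity is immediate from the observation in the text that $E_2(\cA)$ is generated as a $\Q$-algebra by bi-degrees $(1,0)$ and $(0,1)$, all of which lie in the image of $\phi$. For injectivity, the plan is to exhibit a direct-sum decomposition of $A(\cA)$ indexed by components, matching the decomposition $E_2(\cA)^{p,q} = \bigoplus_{\rk F = q} H^p(F;\Q) \otimes H^q(M(\cA_F);\Q)$ from Lemma \ref{decomposition}. For each component $F$, let $A(\cA)_F \subseteq A(\cA)$ be spanned by classes of monomials $x \cdot g_C$ with $Y_C = F$; one checks that this yields a well-defined bi-graded decomposition (the three relations preserve the component: relation (1) occupies the ``empty'' component, relation (2) for circuits stays within a common component by unimodularity, and relation (3) after multiplication by $g_{C'}$ is confined to the $Y_{\{i\} \cup C'}$-component). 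The $g$-factor of $A(\cA)_F$ then identifies with the top-degree summand $H^{\rk F}(M(\cA_F);\Q)$ via the Brieskorn decomposition of the Orlik-Solomon algebra of the underlying matroid, so $\phi$ restricts to an isomorphism on the OS factor of each summand.

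The main obstacle is the coefficient factor: in $A(\cA)_F$ the coefficient lies in $H^*(X;\Q)$ modulo the ideal generated by $\{\alpha_i^*(H^1(E_i;\Q)) : Y_i \supseteq F\}$, and one must show this ideal equals the kernel of the restriction $\gamma_F^*: H^*(X;\Q) \twoheadrightarrow H^*(F;\Q)$. Since the cohomology of an abelian variety is the exterior algebra on its $H^1$, this reduces to an $H^1$-level statement, which follows from the exact sequence of abelian varieties $0 \to F \to X \to X/F \to 0$ together with the unimodular identification of $X/F$ with the image of the map $X \to \prod_{Y_i \supseteq F} E_i$, whose $H^1$ is spanned by the pullbacks $\alpha_i^*(H^1(E_i;\Q))$. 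Unimodularity is essential at several points: it gives connectedness of $Y_C$ so the component decompositions match, and it gives this kernel description so no spurious relations remain.
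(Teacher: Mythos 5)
Your overall approach mirrors the paper's: define $\phi$ on generators, check it kills $I(\cA)$, get surjectivity from the generation in bi-degrees $(1,0)$ and $(0,1)$, and prove injectivity via a component-wise decomposition $A(\cA)=\bigoplus_F A_F$ matched against $E_2(\cA)=\bigoplus_F E_2(F)$. The parts of the argument you spell out (relations (1)--(3) die under $\phi$, compatibility with the differential, the Brieskorn/NBC identification of the Orlik--Solomon factor) are correct and essentially as in the paper.

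However, there is a genuine gap in your treatment of the coefficient factor. You assert that ``in $A(\cA)_F$ the coefficient lies in $H^*(X;\Q)$ modulo the ideal generated by $\{\alpha_i^*(H^1(E_i;\Q)): Y_i\supseteq F\}$'' and then concentrate on identifying that ideal with $\ker\gamma_F^*$. But that first assertion is precisely the nontrivial point, and you give no argument for it. Relation (3) only puts $\alpha_i^*(x)\,g_i$ into $I(\cA)$; multiplying by $g_{C'}$ produces relations of the form $\alpha_i^*(x)\,g_S$ only for those monomials $g_S$ with $Y_i\in S$. To conclude that the coefficient of \emph{every} NBC basis element $g_S$ of $A_F$ is taken modulo $\alpha_i^*(H^1(E_i))$ for \emph{every} $Y_i\supseteq F$ --- including those $Y_i\notin S$ --- one must show that $\alpha_i^*(x)\,g_S$ still lies in $I(\cA)$. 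This is exactly the claim the paper proves by choosing a maximal independent $T\subseteq S$, forming the dependent set $C=T\cup\{Y_i\}$, and exploiting $\partial g_C\in I(\cA)$ to trade $g_T$ for terms containing $g_i$. Without this propagation step, $A_F$ is a priori only a \emph{quotient} of $H^*(X;\Q)/J_F\otimes H^{\rk F}(M(\cA_F))$ whose coefficient ideal depends on the chosen $S$ and could fail to be a tensor product, so your identification of $A_F$ with $E_2(F)$ does not follow. Concretely, for a circuit $\{Y_1,Y_2,Y_3\}$ with $F=Y_{12}$ and NBC basis $\{g_{12},g_{13}\}$, nothing in your argument shows $\alpha_3^*(x)g_{12}\in I(\cA)$.

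On the positive side, your last paragraph supplies a proof that $J_F=\ker\gamma_F^*$ via the exact sequence $0\to F\to X\to X/F\to 0$ and the unimodular identification $X/F\cong\prod_{i\in S}E_i$ for an independent $S$ defining $F$; the paper simply asserts this isomorphism $H^*(F;\Q)\cong H^*(X;\Q)/J_F$, so your argument is a worthwhile addition. But you need to add the propagation argument for relation (3) (or cite the paper's Claim) before the proof is complete.
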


Before we prove this theorem, we'll show an example in which this presentation can be used to compute the cohomology of $M(\cA)$.  Moreover, if we consider the bi-grading on $A(\cA)\cong E_2(\cA)$, then we can compute the dimension of $\gr_jH^i(M(\cA);\Q)$, the associated graded with respect to the weight filtration. By Remark \ref{gr}, the $(p,q)$-th graded piece of $H^*(A(\cA))$ will be isomorphic to $\gr_{p+2q}H^{p+q}(M(\cA);\Q)$. We encode the information about dimension in a two-variable polynomial $H(t,u)$, where the coefficient of $t^iu^j$ is the dimension of $\gr_jH^i(M(\cA);\Q)$. 

\begin{ex}
Let $X=E^2$ for an elliptic curve $E$, and let $\alpha_i:E^2\to E$ be projection onto the $i$-th coordinate. Consider the arrangement $\cA=\{Y_1,Y_2,Y_3\}$ in $E^2$ with $Y_1=\ker\alpha_1$, $Y_2=\ker\alpha_2$, and $Y_3=\ker(\alpha_1-\alpha_2)$. Pick generators $x$ and $y$ for $H^*(E;\Q)$, where $xy$ is the class of the identity of $E$. Then $H^*(E^2;\Q)$ is generated by $x_i=\alpha_i^*(x)$ and $y_i=\alpha_i^*(y)$ for $i=1,2$. This then implies that the algebra $B(\cA)$ is the exterior algebra with generators $\{x_1,y_1,x_2,y_2,g_1,g_2,g_3\}$. 

The relations in $I(\cA)$ can be written as 
\begin{enumerate}[(1)]
\item no relations of the type $g_S$ (since all intersections are nonempty)
\item $g_2g_3-g_1g_3+g_1g_2$ (since $\{Y_1,Y_2,Y_3\}$ is minimally dependent)
\item $x_1g_1$, $y_1g_1$, $x_2g_2$, $y_2g_2$, $(x_1-x_2)g_3$, and $(y_1-y_2)g_3$.
\end{enumerate}
The differential of $A(\cA)=B(\cA)/I(\cA)$ is defined by $dx_i=0$, $dy_i=0$, $dg_1=[Y_1]=x_1y_1$, $dg_2=[Y_2]=x_2y_2$, and $dg_3=[Y_3]=x_1y_1-x_1y_2-x_2y_1+x_2y_2$. 

Computing cohomology, the polynomial described above becomes $$H(t,u)=1+4tu+3t^2u^2+2t^2u^3.$$
Setting $u=1$, we obtain the Poincar\'{e} polynomial  $P(t)=1+4t+5t^2$.
\end{ex}

\begin{proof}[Proof of Theorem \ref{DGA}]
First, we show that there is a surjective homomorphism $\phi$, by defining a map $\theta:B(\cA)\to E_2(\cA)$ which induces $\phi$  as follows: 
Let $$\theta(g_i):= 1\otimes e_i\in H^0(Y_i;\Q)\otimes H^1(M(\cA_{Y_i});\Q)$$ 
where $e_i$ is the canonical generator of $H^1(M(\cA_{Y_i});\Q)$,
and for $x\in H^i(X;\Q)$, let $$\theta(x):=x\otimes 1\in H^i(X;\Q)\otimes H^0(M(\cA_X);\Q).$$

We have already observed that $E_2(\cA)$ is generated by $E_2^{1,0}$ and $E_2^{0,1}$. 
Even more explicitly, the elements $1\otimes e_i$ and $x\otimes 1$ as above generate the algebra. Thus, $\theta$ is surjective. 

By our observations above, it is easy to see that $\theta(g_S)=0$ whenever $Y_S=\emptyset$. For relation (2), suppose $S$ is a dependent subset of $\cA$. Then $$\theta(\partial g_S)=1\otimes(\partial e_S)\in H^0(Y_S;\Q)\otimes H^{\rk(S)}(M(\cA_{Y_S});\Q)$$ which is zero since $\partial e_S=0$ in the Orlik-Solomon algebra $H^*(M(\cA_{Y_S});\Q)$. Also, by our observations above, $\theta(\alpha_i^*(x)g_i)$ is equal to zero. Therefore, $\theta(I(\cA))=0$ and hence $\theta$ induces the desired surjection $\phi:A(\cA)\to E_2(\cA)$.

We can decompose $B(\cA)$ with respect to the components of the arrangement, $B(\cA)=\oplus_F B_F$, where $B_F$ is the $\Q$-vector space spanned by $xg_S$ for all standard tuples $S$ of hyperplanes in $\cA$ whose intersection is exactly $F$, and all $x\in H^*(X;\Q)$. The ideal $I(\cA)$ is homogeneous with respect to this grading. Thus, $A(\cA)$ can be decomposed by $A(\cA)=\oplus_F A_F$, where $A_F=B_F/I_F$ with $I_F:=I(\cA)\cap B_F$.

The $E_2$ term of the Leray spectral sequence can also be graded by the components. Here, we have $E_2(\cA)=\oplus_F E_2(F)$, where for each component $F$, $E_2(F)=H^*(F;\Q)\otimes H^{\rk(F)}(M(\cA_F);\Q)$.
Since we will require information about the cohomology of central hyperplane arrangements, let us quickly recall the non-broken circuit basis (see, for example, \cite{orlikterao}). For a given order on the hyperplanes in $\cA$,  a tuple of hyperplanes is called standard if the hyperplanes are written in increasing order. A standard tuple $S$ of hyperplanes is a broken circuit if there is some hyperplane $H$ greater than all those in $S$ such that $S\cup\{H\}$ is a minimally dependent set. 
We say that a standard tuple $S$ is a non-broken circuit if it does not contain any broken circuits. The cohomology of a central hyperplane arrangement has a basis $e_S$ indexed by the non-broken circuits $S$. 

It suffices to show that, as $\Q$-vector spaces, $A_F\cong E_2(F)$. We do this by examining $A_F$. We have $B_F\cong\oplus_S H^*(X;\Q)\cdot g_S$, where the direct sum is taken over all standard tuples $S$ of hyperplanes in $\cA$ with $Y_S=F$. If we consider just the ideal $I_1$ generated by relations (1) and (2), then $$B_F/(I_1\cap B_F)\cong\oplus_S H^*(X;\Q)\cdot g_S,$$ where the sum is taken over all non-broken circuits $S$ with $Y_S=F$. This is because relations (1) and (2) are  just the Orlik-Solomon relations  on the $g_i$'s associated to $F$.

Next, we claim that relation (3) implies that for all $Y_i\supseteq F$, all $S\subseteq\cA$ with $Y_S=F$, and all $x\in H^1(E_i;\Q)$, we have $\alpha_i^*(x)g_S\in I$. This implies that relation (3) depends only on the component $F$, and not on the choice of subset $S$. This claim is clearly true when $Y_i\in S$. If $Y_i\notin S$, then take a maximal independent subset of $S$, denoted by $T$. Then $C:=T\cup\{Y_i\}$ is a dependent set, and $Y_C=Y_T=F$. We may assume, for ease of notation, that our hyperplanes are ordered so that $g_S=g_{(S-T)}g_T$ and $g_C=g_ig_T$. Then we have $g_T-g_i\partial g_T=\partial g_C\in I$, since $C$ is dependent. This implies that 
\begin{align*}
\alpha_i^*(x)g_S
&= \alpha_i^*(x)g_{(S-T)}g_T\\
&= \alpha_i^*(x)g_{(S-T)}(g_T-g_i\partial g_T) + \alpha_i^*(x)g_{(S-T)}g_i\partial g_T\\
&\in I.
\end{align*}

Let $J_F$ be the ideal in $H^*(X;\Q)$ generated by $\alpha_i^*(x)$ for all $Y_i\supseteq F$ and $x\in H^1(E_i;\Q)$. Now, since $H^*(F;\Q)\cong H^*(X;\Q)/J_F$, we must have that $$A_F\cong\oplus H^*(F;\Q)\cdot g_S$$ where the sum is taken over all non-broken circuits $S$ with $Y_S=F$. This is then isomorphic to $H^*(F;\Q)\otimes H^q(M(\cA_F);\Q)\cong E_2(F)$, since the non-broken circuits form a basis for $H^q(M(\cA_F);\Q)$.
\end{proof}

\begin{rmk}
If $\cA$ is not unimodular, we can still define the bi-graded differential algebra $A(\cA)$ and the homomorphism $\phi:A(\cA)\to E_2(\cA)$, but it will no longer be surjective. The problem is that if an intersection $Y_S$ of subvarieties has multiple components, the image of $\phi$ will include the element $1\in H^0(Y_S;\Q)$, but it will not include the corresponding classes for the individual components.
\end{rmk}

\begin{rmk}
While this combinatorial model gives a way of computing cohomology of a unimodular abelian arrangement, it is still unknown if there is a combinatorial formula for the Poincar\'e polynomial (or Betti numbers). However, this model does give us a combinatorial formula for the Euler characteristic (and, more generally, the E-polynomial $H(-1,u)$, which is a specialization of the Hodge polynomial). This works even for non-unimodular arrangements, and it can be realized as a specialization of a Tutte polynomial. 

For a central abelian arrangement, one can associate the Tutte polynomial $$T(x,y) = \sum_{S\subseteq\cA} m(S)(x-1)^{\rk\cA-\rk S}(y-1)^{|S|-\rk S}$$
where $m(S)$ is the number of connected components of the intersection $Y_S$. 
For simplicity, assume that the arrangement is essential (that is, the rank of the arrangement is the dimension of the ambient space, which we will denote by $n$), though the following discussion can be extended to non-essential arrangements. 
Denote the number of non-broken circuits associated to a component $F$ by $\nbc(F)$, so that $\dim H^{\rk(F)}(M(\cA_F);\Q)=\nbc(F)$. We also have that for a component $F$, $\dim H^p(F;\Q) = \binom{2\dim F}{p}$. Thus, using our decomposition in Lemma \ref{decomposition} and an argument similar to that in the toric case \cite[Section~5]{moci}, the Hilbert series of $A(\cA)$ is $$\Hilb_{A(\cA)}(t) = t^n T\left(1+\dfrac{(1+t)^2}{t},0\right).$$
As a corollary (setting $t=-1$), we obtain the explicit formula $$(-1)^n\sum_{F\in\mathcal{P}} \nbc(F)$$ for the Euler characteristic, where $\mathcal{P}$ is the set of dimension-zero components. 

\end{rmk}

\bibliographystyle{amsalpha}
\bibliography{bibby}

\end{document}